\newcommand{\lessp}{\prec}
\newcommand{\dual}[1]{\overline{#1}}
\newcommand{\poch}[3]{\left(#1;#2\right)_{#3}}
\newcommand{\pochq}[2]{\left(#1\right)_{#2}}
\newcommand{\pochqn}[1]{\pochq{#1}{n}}
\newcommand{\cO}{O} 
\newcommand{\cS}{\mathcal{S}}
\newcommand{\Fi}{F_{\text{1}}}
\newcommand{\Fii}{F_{\text{2}}}
\newcommand{\Fiii}{F_{\text{3}}}
\newcommand{\Gi}{G_{\text{1}}}
\newcommand{\Gii}{G_{\text{2}}}
\newcommand{\Giii}{G_{\text{3}}}
\newtheorem{theorem}{Theorem}[section]
\newtheorem*{theorem*}{Theorem}
\newtheorem{corollary}[theorem]{Corollary}
\newtheorem*{corollary*}{Corollary}
\newtheorem{lemma}[theorem]{Lemma}
\newtheorem*{lemma*}{Lemma}
\newtheorem{proposition}[theorem]{Proposition}
\newtheorem*{proposition*}{Proposition}
\newtheorem{conjecture}[theorem]{Conjecture}
\newtheorem*{conjecture*}{Conjecture}
\newtheorem{fact}[theorem]{Fact}
\theoremstyle{definition}
\newtheorem*{definition*}{Definition}
\newtheorem*{example*}{Example}
\newtheorem{problem}[theorem]{Problem}
\newtheorem*{problem*}{Problem}
\theoremstyle{remark}
\title{On $q$-Series Identities Related to Interval Orders}
\date{}
\author{George E. Andrews\thanks{Mathematics Department, Pennsylvania State
University,  University Park, PA, USA, \texttt{andrews@math.psu.edu}. Partially
supported by NSA grant H98230-12-1-0205.}
\and V\'\i t Jelínek\thanks{Computer
Science
Institute, Charles University, Prague, \texttt{jelinek@iuuk.mff.cuni.cz}.
Supported by project CE-ITI (GBP202/12/G061) of the Czech Science
Foundation.}
}
\begin{document}

\maketitle
\begin{abstract}
We prove several power series identities involving the refined generating
function of interval orders, as well as the refined generating function of the
self-dual interval orders. These identities may be expressed as 
\[
 \sum_{n\ge 0}\poch{\frac{1}{p}}{\frac{1}{q}}{\!\!n}=
\sum_{n\ge 0} pq^n\poch{p}{q}{n}\poch{q}{q}{n}
\]
and
\[
  \sum_{n\ge 0} (-1)^n\poch{\frac{1}{p}}{\frac{1}{q}}{\!\!n}=
\sum_{n\ge 0} pq^n\poch{p}{q}{n}\poch{-q}{q}{n} 
= \sum_{n\ge0} \left(\frac{q}{p}\right)^n\poch{p}{q^2}{n},
\]
where the equalities apply to the (purely formal) power series expansions of the
above expressions at $p=q=1$, as well as at other suitable roots of unity. 
\end{abstract}

\section{Introduction and Combinatorial Motivation}

Throughout this paper, we use the notation $\poch{a}{q}{n}$ for the
$q$-Pochhammer symbol, defined as
\[
 \poch{a}{q}{n}=\prod_{k=0}^{n-1} (1-aq^k),
\]
with $\poch{a}{q}{0}=1$. Where $q$ is understood from the context, we
write $\pochq{a}{n}$ instead of $\poch{a}{q}{n}$ for brevity. 

The main goal of this paper is to prove new identities for the generating
functions of interval orders and self-dual interval orders. The identities we
derive may be stated as
\[
 \sum_{n\ge 0}\poch{\frac{1}{p}}{\frac{1}{q}}{\!\!n}=
\sum_{n\ge 0} pq^n\poch{p}{q}{n}\poch{q}{q}{n}
\]
and
\[
  \sum_{n\ge 0} (-1)^n\poch{\frac{1}{p}}{\frac{1}{q}}{\!\!n}=
\sum_{n\ge 0} pq^n\poch{p}{q}{n}\poch{-q}{q}{n} 
= \sum_{n\ge0} \left(\frac{q}{p}\right)^n\poch{p}{q^2}{n},
\]
where the equalities mean that the corresponding expressions admit the same
power series expansion as $p$ and $q$ approach 1. It follows from our argument
that the equalities are in fact valid when $p$ and $q$ approach other roots of
unity as well, provided the corresponding series expansions exist.

\subsection{Interval Orders}
Let $P$ be a poset with a strict order relation $\lessp$. We say that $P$ is an
\emph{interval order} if we can assign to each element $x\in P$ a closed real
interval $[l_x,r_x]$  in such a way that $x\lessp y$ if and only if $r_x<l_y$.
As shown by Fishburn~\cite{FishburnPrvni}, a poset is an interval order if and
only if it does not contain a subposet isomorphic to the disjoint union of two
chains of size two. In this paper, we are interested in \emph{unlabelled}
interval orders, i.e., we treat isomorphic posets as identical.

A \emph{Fishburn matrix} is an upper-triangular square matrix of nonnegative
integers with the property that every row and every column has at least one
nonzero entry. The \emph{size} of a matrix is defined as the sum of its entries.
As implied in the work of Fishburn~\cite{FishburnLength,fishburn1985interval},
there is a bijective correspondence between unlabelled interval orders of $n$
elements and Fishburn matrices of size~$n$. In fact, as pointed out
in~\cite{DJK}, there is a bijection that maps interval orders with $n$ elements
having $r$ minimal and $s$ maximal elements to Fishburn matrices of size $n$,
whose first row sums to $r$ and whose last column sums to~$s$.

Let $f_n$ be the number of unlabelled interval orders on $n$ elements.
The sequence $(f_n)_{n\ge 0}$ is known as the \emph{Fishburn
numbers}~\cite[sequence A022493]{oeis}. Apart from counting interval orders and
Fishburn matrices, the numbers $f_n$ have several other combinatorial
interpretations. For instance, $f_n$ is the number of Stoimenow diagrams with
$n$ arcs~\cite{Stoimenow,Zagier}, the number of ascent sequences of length
$n$~\cite{BMCDK}, the number of certain pattern-avoiding permutations of
order~$n$~\cite{BMCDK,bivincular}, or the number of certain pattern-avoiding
insertion tables~\cite{Levande}.

Zagier~\cite{Zagier} has shown that the generating function of Fishburn numbers
may be expressed as
\[
\sum_{n\ge0} f_n x^n =\sum_{n\ge 0} \prod_{k=1}^n (1-(1-x)^k)=\sum_{n\ge
0}\poch{1-x}{1-x}{n},
\]
and deduced the asymptotics
\[
 f_n= n!
\left(\frac{6}{\pi^2}\right)^{\!n}\!\sqrt{n}\left(\alpha+\cO\left(\frac{1}{n}
\right)\right),\quad \text{
  with } \alpha=\frac{12\sqrt{3}}{\pi^{5/2}}e^{\pi^2/12}.
\]
Subsequently, several authors have obtained refinements of this generating
function, enumerating interval orders with respect to various natural
statistics, such as the number of minimal and maximal
elements~\cite{selfdual,KitaevRemmel}, the number of indistinguishable
elements~\cite{indistin} or the number of distinct
down-sets~\cite{BMCDK,selfdual}. 

In this paper, we focus on the refined enumeration of interval orders by the
number of maximal elements. Let $f_{m,\ell}$ be the number of interval orders of
size $m$ with $\ell$ maximal elements. Recall that $f_{m,\ell}$ is also equal to
the number of Fishburn matrices of size $m$ whose last column sums to~$\ell$.
Kitaev and Remmel~\cite{KitaevRemmel} have shown that 
\begin{align}
 \sum_{m\ge 0, \ell\ge 0} f_{m,\ell} x^{m-\ell}y^\ell
&=1+\sum_{n\ge 0}
\frac{y}{(1-y)^{n+1}}\prod_{k=1}^n\left(1-(1-x)^k\right)\notag\\
&=1+\sum_{n\ge 0}
\left(\frac{1}{(1-y)^{n+1}}-\frac{1}{(1-y)^{n}}\right)\prod_{k=1}
^n\left(1-(1-x)^k\right)\notag\\
&=\sum_{n\ge 0}
\left(\frac{1-x}{1-y}\right)^{n+1}\prod_{k=1}^n\left(1-(1-x)^k\right)\notag\\
&=\sum_{n\ge 0}
\left(\frac{1-x}{1-y}\right)^{n+1}\poch{1-x}{1-x}{n}.\label{eq-f3}
\end{align}
Zagier~\cite{Zagier}, Yan~\cite{Yan} and Levande~\cite{Levande} have
independently obtained another formula for the same generating function,
which has also been conjectured by Kitaev and Remmel~\cite{KitaevRemmel},
namely
\begin{equation}
\sum_{m\ge 0, \ell\ge 0} f_{m,\ell} x^{m-\ell}y^\ell=\sum_{n\ge
0}\poch{1-y}{1-x}{n}.
\label{eq-f1}
\end{equation}
We remark that Jel\'\i nek~\cite{selfdual} has derived a formula for the
generating function counting interval orders by their size and the number of
minimal and maximal elements, which simultaneously generalizes
both~\eqref{eq-f3} and~\eqref{eq-f1}, using the fact that the number of minimal
elements has the same distribution as the number of maximal elements.

\subsection{Self-Dual Interval Orders}
Many families of objects enumerated by the Fishburn numbers admit a natural
involutive symmetry map, which transforms an object into its `mirror image'.
In most cases, the known bijections between Fishburn-enumerated families
commute with the corresponding symmetry maps. This suggests that the mirror
symmetry is an inherent property of Fishburn families, and leads to a natural
problem of enumerating symmetric Fishburn objects, i.e., objects that are fixed
by the symmetry map.

For interval orders, the symmetry map corresponds to poset duality. For a
poset $P$ with a strict order relation $\lessp$, its \emph{dual poset} $\dual P$
has the same elements as $P$ and its order relation $\dual\lessp$ is defined by
$x\dual\lessp y\iff y\lessp x$. Clearly, the dual of an interval order is again
an interval order. A poset is \emph{self-dual} if it is isomorphic to its dual. 

In the above-mentioned correspondence between interval orders and Fishburn
matrices, the self-dual interval orders correspond to Fishburn matrices that are
symmetric with respect to the north-east diagonal. We refer to such matrices as
\emph{self-dual} Fishburn matrices. Formally, an $n\times n$ matrix
$M=(M_{ij})_{i,j=1}^n$ is \emph{self-dual} if it satisfies
$M_{i,j}=M_{n-j+1,n-i+1}$ for all $i$ and $j$. Of course, such a matrix $M$ is
uniquely determined by the entries that lie on or below the north-east diagonal,
i.e., by the entries $M_{i,j}$ with $i+j\ge n+1$; we refer to these entries as
\emph{the south-east entries} of~$M$. Moreover, the entries lying on the
north-east diagonal (i.e., the entries $M_{i,j}$ with $i+j=n+1$) will be called
\emph{the diagonal entries} of~$M$.

The \emph{reduced size} of a matrix $M$ is defined as the sum of its south-east
entries. For the purposes of enumerating self-dual Fishburn matrices, and thus
also self-dual interval orders, the notion of reduced size seems to be more
natural than the notion of size. Let $\cS_m$ be the set of the self-dual
Fishburn matrices of reduced size $m$, and let $\cS_{m,\ell}$ be the set of
those matrices in $\cS_m$ whose last column has sum~$\ell$. Let $s_m$ and
$s_{m,\ell}$ be the cardinalities of $\cS_m$ and $\cS_{m,\ell}$, respectively.

The following two facts were proved by Jelínek~\cite{selfdual} by means of
generating functions, and a bijective proof was subsequently found by Yan and
Xu~\cite{yanxu}.
\begin{fact}
For every $m\ge 1$ and every $\ell$, the set $\cS_{m,\ell}$ contains precisely
$s_{m,\ell}/2$ matrices whose diagonal entries are all zero, and therefore also
$s_{m,\ell}/2$ matrices with at least one nonzero diagonal entry.
\end{fact}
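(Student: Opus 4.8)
The plan is to deduce the Fact from a single identity between generating functions. For a self-dual Fishburn matrix $M$, write $\mathrm{rs}(M)$ for its reduced size and $c(M)$ for the sum of its last column, and set
\[
 S(x,y)=\sum_{M} x^{\mathrm{rs}(M)-c(M)}y^{c(M)},
\]
the sum ranging over all self-dual Fishburn matrices (the empty matrix included, contributing the summand $1$); let $A(x,y)$ be the analogous sum restricted to those $M$ all of whose diagonal entries are $0$. Since $[x^{m-\ell}y^{\ell}]S=s_{m,\ell}$, while $[x^{m-\ell}y^{\ell}]A$ is the number of matrices of $\cS_{m,\ell}$ with all diagonal entries $0$, the Fact is equivalent to the relation $S(x,y)=2A(x,y)-1$, the $-1$ absorbing the discrepancy at the empty matrix. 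So it suffices to obtain closed forms for $S$ and $A$ and to verify this relation.

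To get $S$ I would generate a self-dual Fishburn matrix of order $n$ by repeatedly peeling off its outermost frame --- its first and last rows together with its first and last columns. By self-duality and upper-triangularity such a frame is determined by the first row of $M$ alone (its first column and last row being forced to vanish apart from the single entry $M_{1,1}=M_{n,n}$), and removing it leaves a self-dual array of order $n-2$. The familiar complication is that this inner array need not be a Fishburn matrix on its own, since its outermost lines may be empty, covered instead by an already-peeled frame; one deals with this by a transfer-matrix recursion whose state records how many outer lines are still awaiting a nonzero entry. Summing the recursion yields an expression $S(x,y)=\sum_{n\ge0}(\cdots)$, namely Jelínek's formula, which one may simply quote. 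Two points will matter afterwards: the parity of $n$ (the peeling stops at an empty $0\times0$ core when $n$ is even, and at the single central entry $M_{(n+1)/2,(n+1)/2}$ --- which is a diagonal entry --- when $n$ is odd), and the observation that each peeling step removes precisely the corner entry $M_{1,n}$.

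The generating function $A$ comes from the very same peeling, subject to one extra constraint. Since every step removes the corner $M_{1,n}$, and the diagonal entries of $M$ are exactly the corners removed over the successive steps together with the diagonal entries of whatever core survives, $M$ has all diagonal entries $0$ precisely when every removed corner is $0$ and the surviving core does too; for odd $n$ this forces the central entry to vanish, which is possible only because that central row and column get covered by the peeled frames. Imposing ``$M_{1,n}=0$'' at every step, and handling the core accordingly, the same transfer-matrix computation produces a closed form for $A$. What then remains is the algebraic verification that $S=2A-1$; this is exactly where the $q$-series manipulations of the present paper --- in which, notably, a sign $(-1)^n$ and products to the base $q^2$ make their appearance --- ought to do the work, and I expect this verification, rather than the combinatorial set-up, to be the principal obstacle.

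Finally, I would remark that a bijective proof also exists (in essence the Yan--Xu argument): one looks for an involution of $\cS_{m,\ell}$, fixed-point-free when $m\ge1$, that swaps the matrices with an all-zero diagonal and those with a nonzero diagonal entry. The naive candidate --- pass to an $(n+1)\times(n+1)$ array by shifting the south-east entries of $M$ one step toward the corner $(n,n)$, setting the new anti-diagonal to $0$, and completing by self-duality --- does preserve reduced size and last-column sum, but is not Fishburn-valid in general (a central row of the image can become empty), so the actual involution must track how the nonzero diagonal entries of $M$ support its central lines; pinning this down is the crux of that approach.
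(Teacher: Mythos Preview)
The paper does not itself prove this Fact; it quotes it from Jel\'{\i}nek~\cite{selfdual} (generating functions) and Yan--Xu~\cite{yanxu} (bijection) as background for the later discussion.

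Your proposal is a plan rather than a proof. The reduction to the single identity $S(x,y)=2A(x,y)-1$ is correct and is in the spirit of Jel\'{\i}nek's route, but you carry out neither the derivation of a closed form for $A$ nor the promised algebraic verification; you explicitly flag the latter as ``the principal obstacle'' and leave it undone. Your hope that the $q$-series manipulations of the present paper would supply that verification is misplaced: the identities proved here (Theorem~\ref{thm-main}) concern the generating function $\Gi$ of \emph{row}-Fishburn matrices, and its connection to self-dual Fishburn matrices passes through the \emph{second} Fact, not the one at hand, so invoking those identities would be circular or simply irrelevant. As written, the proposal therefore has a genuine gap at exactly the point you yourself identify as the crux. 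The bijective remark at the end is likewise only a sketch: you note that the naive involution fails and that ``pinning this down is the crux,'' but do not pin it down.
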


\begin{fact}
Let us call a matrix $M$ a \emph{row-Fishburn matrix} if $M$ is an
upper-triangular matrix of nonnegative integers such that every row has at least
one positive entry. Let $r_{m,\ell}$ be the number of row-Fishburn matrices with
(non-reduced) size $m$ and the sum of the last column equal to $\ell$. For $m\ge
1$ and any $\ell$, we have $r_{m,\ell}=s_{m,\ell}/2$.
\end{fact}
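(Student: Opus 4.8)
The plan is to recast the Fact as an identity of generating functions. Put
\[
 R(x,y)=\sum_{m,\ell}r_{m,\ell}x^{m-\ell}y^{\ell},\qquad
 S(x,y)=\sum_{m,\ell}s_{m,\ell}x^{m-\ell}y^{\ell},
\]
where in each sum the empty matrix contributes the constant term $1$. Since $r_{0,0}=s_{0,0}=1$, the equalities $r_{m,\ell}=s_{m,\ell}/2$ for all $m\ge1$ are equivalent to the single identity $S(x,y)=2R(x,y)-1$, and the plan is to compute $R$ and $S$ in closed form and verify this.

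First I would compute $R$, classifying a row-Fishburn matrix by its dimension $n$; it is then an $n\times n$ upper-triangular array of nonnegative integers all of whose rows are nonzero, and of its cells the $n$ in the last column carry a factor $y$ while the other $\binom{n}{2}$ carry a factor $x$. Inclusion--exclusion over the set of rows one forces to vanish -- using that row $i$ has $n-i$ cells outside the last column and exactly one cell inside it -- yields a signed sum whose common factor is $(1-x)^{-\binom{n}{2}}(1-y)^{-n}$ and whose remaining part factors as $\prod_{i=1}^{n}\bigl(1-(1-y)(1-x)^{n-i}\bigr)$, whence
\[
 R_n(x,y)=\frac{\poch{1-y}{1-x}{n}}{(1-x)^{\binom{n}{2}}(1-y)^{n}},\qquad
 R(x,y)=\sum_{n\ge0}\frac{\poch{1-y}{1-x}{n}}{(1-x)^{\binom{n}{2}}(1-y)^{n}}.
\]
Applying the inversion $\poch{1/a}{1/b}{n}=(-1)^{n}a^{-n}b^{-\binom{n}{2}}\poch{a}{b}{n}$ with $a=1-y$, $b=1-x$ turns this into $R(x,y)=\sum_{n\ge0}(-1)^{n}\poch{1/p}{1/q}{n}$, with $p=1-y$ and $q=1-x$ -- exactly the expansion about $p=q=1$ of the series on the left of the second identity displayed in the Introduction.

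Next I would obtain $S$ by the same method applied to self-dual Fishburn matrices. The new feature is that self-duality ties row $i$ to column $n-i+1$, so forcing a row to vanish forces its mirror column to vanish as well; the inclusion--exclusion must therefore run over mirror-symmetric families of forced rows and columns, keeping track of which of the resulting forced cells lie on or below the north-east diagonal (those are the cells whose entries make up the reduced size) and which of these in turn lie in the last column. This is the generating-function computation carried out by Jel\'\i nek~\cite{selfdual}, and it yields a closed form for $S$ once more of $q$-Pochhammer type. It then remains to verify $S(x,y)=2R(x,y)-1$ as an identity between two explicit $q$-series; I expect this to reduce, via a telescoping (Abel-summation) manipulation of $q$-Pochhammer symbols of the kind used to pass from the first to the third line of~\eqref{eq-f3} -- which rests on $\poch{1-x}{1-x}{n}-\poch{1-x}{1-x}{n+1}=(1-x)^{n+1}\poch{1-x}{1-x}{n}$ -- to a termwise comparison. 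The main obstacle, I expect, is the self-dual inclusion--exclusion itself, namely the bookkeeping of how the mirror-coupled forced cells overlap near the diagonal, rather than the final comparison.

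Finally, I would note a conceptually cleaner but technically delicate alternative, namely a bijective proof. By the first of the two facts stated above it suffices to match the self-dual Fishburn matrices with all-zero diagonal to the row-Fishburn matrices, and such a matrix of dimension $n$ is determined by its entries lying strictly above the north-east diagonal; one ``straightens'' this staircase-shaped region into an ordinary upper-triangular matrix while following how the requirement ``every row is nonzero'' (equivalently, by self-duality, ``every column is nonzero'') is transformed. The point of difficulty is that the staircase region is not itself of upper-triangular shape, so the straightening genuinely rearranges the entries and must be arranged so that the nonvanishing conditions correspond correctly; this is in essence the bijection of Yan and Xu~\cite{yanxu}.
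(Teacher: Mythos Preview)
The paper does not actually prove this Fact: it is stated with attribution to Jel\'\i nek~\cite{selfdual} (generating-function argument) and Yan and Xu~\cite{yanxu} (bijection), and no proof appears in the paper itself. So there is no ``paper's own proof'' to compare against; what can be said is that your proposal sketches precisely the two approaches the paper cites.

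Your computation of $R(x,y)$ is correct and reproduces~\eqref{eq-rxy}. Your outline for $S(x,y)$ is also on the right track, but it is only an outline: you do not carry out the self-dual inclusion--exclusion, nor do you write down a closed form for $S$ and verify $S=2R-1$; instead you defer both steps to~\cite{selfdual}. Likewise, the bijective alternative is described only at the level of ``straighten the staircase region,'' with the actual construction deferred to~\cite{yanxu}. As a summary of the literature this is accurate; as a self-contained proof it has a genuine gap exactly where you yourself flag it, namely the computation of $S$. If you intend to fill that gap, the concrete issue is that in the self-dual case the inclusion--exclusion does not factor row by row as it does for $R_n$: forcing row $i$ to vanish also forces column $n+1-i$ to vanish, and these interact with other rows, so the analogue of your product $\prod_i\bigl(\tfrac{1}{(1-x)^{n-i}(1-y)}-1\bigr)$ must be replaced by a more careful bookkeeping (this is what~\cite{selfdual} does).
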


This shows that enumerating self-dual Fishburn matrices by their reduced size is
essentially equivalent to enumerating row-Fishburn matrices by their size. It
is not hard to observe (see~\cite[Theorem~4.1]{selfdual}) that the generating
function of $r_{m,\ell}$ may be expressed as
\begin{align}
\sum_{m,\ell\ge 0} 
r_{m,\ell}x^{m-\ell}y^{\ell}&=
\sum_{n\ge 0} \prod_{k=1}^n\left(\frac{1}{(1-y)(1-x)^{k-1}}-1\right)\notag\\
&=\sum_{n\ge 0}(-1)^n\poch{\frac{1}{1-y}}{\frac{1}{1-x}}{\!n}.
\label{eq-rxy}
\end{align}

Denoting by $r_m$ the number of row-Fishburn matrices of size $m$, we then get
\begin{equation}
 \sum_{m\ge 0}
r_mx^m=\sum_{n\ge 0}\prod_{k=1}^n\left(\frac{1}{(1-x)^{k}}-1\right)
=\sum_{n\ge 0}(-1)^n\poch{\frac{1}{1-x}}{\frac{1}{1-x}}{\!n}.
\label{eq-r}
\end{equation}

The sequence $(r_m)_{m\ge 0}$ is listed as A158691 in the OEIS~\cite{oeis}.
Peter Bala, who is the author of the OEIS entry, has pointed out that apparently
the same coefficient sequence arises from expanding a different expression,
namely
\begin{equation}
 \sum_{n\ge 0}\prod_{k=1}^n\left(1-(1-x)^{2k-1}\right)= \sum_{n\ge 0}
\poch{1-x}{(1-x)^2}{n}.
\label{eq-r2}
\end{equation}
He conjectured that \eqref{eq-r} and \eqref{eq-r2} indeed determine the same
power series.

In this paper, we prove the identity conjectured by Bala. We actually extend
this identity to the bivariate generating function of $r_{m,\ell}$ from
\eqref{eq-rxy}, and moreover, we derive yet another, third way of expressing
this generating function. Apart from that, we derive similar identities for the
generating function of $f_{m,\ell}$, providing a third expression for this
generating function, different from those given in \eqref{eq-f3} and
\eqref{eq-f1}. 

It is remarkable that the identities involving the generating function of
$r_{m,\ell}$ turn out to be analogous to those involving the generating
function of $f_{m,\ell}$. In fact, in some cases the identities for the
two generating functions may be deduced from the same general rule by a
different choice of a parameter.

In the course of preparation of our manuscript, we have been informed that
Bringmann, Li and Rhoades~\cite{bring} have independently obtained another proof
of Bala's conjecture, as well as several other identities involving the
generating functions of Fishburn and row-Fishburn matrices. Most, but not all,
of the identities derived by Bringmann, Li and Rhoades also follow from our
Theorem~\ref{thm-main} by setting $y$ equal to~$x$. Apart from the power series
identities, Bringmann, Li and Rhoades have obtained an asymptotic estimate for
the number of row-Fishburn matrices.

\begin{theorem}[Bringmann, Li, Rhoades~\cite{bring}] Let $r_m$ be the number of
row-Fishburn matrices of size $m$. Then, as $m\to\infty$, we have 
\[
r_m= m!
\left(\frac{12}{\pi^2}\right)^{\!m}\left(\beta+\cO\left(\frac{1}{m}
\right)\right), \quad \text{
  with } \beta=\frac{6\sqrt{2}}{\pi^{2}}e^{\pi^2/24}.
\]
\end{theorem}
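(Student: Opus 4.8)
The plan is to mirror Zagier's derivation of the asymptotics of the Fishburn numbers~\cite{Zagier}, replacing Kontsevich's series $\sum_n\poch{q}{q}{n}$ by its row-Fishburn analogue. Combining~\eqref{eq-r} with~\eqref{eq-r2} (the identity established in this paper) and substituting $q=1-x$ gives
\[
\sum_{m\ge 0}r_m x^m=\sum_{n\ge 0}\poch{q}{q^2}{n}=:G(q),
\]
a Kontsevich-type ``strange'' series that converges only when $q$ is a root of unity. The first stage is to establish a \emph{strange identity} for $G$: to show that the asymptotic expansion of $G$ at $q\to 1$ (equivalently, the associated quantum modular form / Eichler integral) agrees to all orders with a partial theta series of the shape $\sum_{n\ge0}\chi(n)q^{Q(n)}$ for a suitable Dirichlet character $\chi$ and quadratic $Q$. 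The natural tool is a Bailey pair (or the Rogers--Fine identity) rewriting $\sum_n\poch{q}{q^2}{n}$ as a partial theta function plus a remainder that is visibly negligible near $q=1$; this is the row-Fishburn counterpart of Zagier's identity $\sum_n\poch{q}{q}{n}=-\tfrac12\sum_{n\ge1}n\left(\tfrac{12}{n}\right)q^{(n^2-1)/24}$.

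The second stage transfers the strange identity into coefficient asymptotics. Formally $r_m=[x^m]G(1-x)$; substituting the partial theta expansion expresses $r_m$ as a (divergent, hence regularized) sum $\sum_n\chi(n)\,(\text{poly in }n)\binom{Q(n)}{m}$ over the support of $\chi$, which one resums by analytic continuation: the regularized sum is a special value of the Dirichlet $L$-function $L(s,\chi)$, whose functional equation supplies the $\Gamma$-factors and powers of $\pi$. Combining this with Stirling's formula should produce the factorial growth $m!\,(12/\pi^2)^m$, the constant $\beta=\tfrac{6\sqrt2}{\pi^2}e^{\pi^2/24}$, and, from the first subleading term, the error $O(1/m)$. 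The fact that $\beta$ carries no polynomial-in-$m$ factor (unlike the $\sqrt n$ appearing for $f_n$) reflects that the relevant theta function has weight $1/2$ rather than $3/2$, so the Borel transform $\sum_m\frac{r_m}{m!}x^m$ acquires only a \emph{simple} pole at $x=\pi^2/12$; an alternative, more analytic route is to prove this meromorphic continuation of the Borel transform directly, via an Eichler-integral representation, and then read off $r_m/m!\sim\beta\,(12/\pi^2)^m$ by ordinary singularity analysis.

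The main obstacle is the first stage together with the rigorous bookkeeping of error terms in the second. Because $G(q)$ does not converge on $(0,1)$, every ``asymptotic expansion at $q=1$'' must be interpreted through the Eichler integral of the companion theta function (or a suitable regularization of the $n$-sum), and one must show that the Bailey-pair remainder, the tail of the partial theta series, and the corrections in the $L$-value all contribute only at the $O(1/m)$ level, \emph{uniformly}, so that they can be packaged into the stated error. Once the correct partial theta function and the correct normalization of the Eichler integral are pinned down, the remaining manipulations (Mellin transform or Euler--Maclaurin, the modular transformation $t\mapsto \pi^2/t$, and Stirling) are routine but lengthy.
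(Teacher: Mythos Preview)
The paper does not prove this theorem. It is stated as a result of Bringmann, Li and Rhoades~\cite{bring} and no argument is given; the present paper is about the $q$-series identities, not the asymptotics. So there is no ``paper's own proof'' to compare your attempt against.

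As for your outline itself: it is a reasonable sketch of the Zagier-style route, and the key structural observation --- that the Bala form $\sum_{n\ge0}\poch{q}{q^2}{n}$ is the right Kontsevich-type series to feed into a strange identity --- is exactly why the identity $\Gi=\Giii$ proved here is useful for asymptotics. But what you have written is a plan, not a proof: you have not identified the specific partial theta function, the character, or the quadratic form $Q$, nor have you carried out the Bailey/Rogers--Fine manipulation that actually produces the strange identity, and you explicitly flag the error-term bookkeeping as an unresolved obstacle. Until those pieces are supplied and the constants $12/\pi^2$ and $\beta=\tfrac{6\sqrt{2}}{\pi^2}e^{\pi^2/24}$ are seen to drop out of the computation, this remains a strategy rather than a proof.
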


\section{The Results}

Let us define six formal power series as follows:
\begin{align*}
\Fi(x,y)&=\sum_{n\ge 0} \poch{1-y}{1-x}{n},\\
\Fii(x,y)&=\sum_{n\ge 0}
\frac{1}{(1-y)(1-x)^n}\poch{\frac{1}{1-y}}{\frac{1}{1-x}}{\!n}
\poch{\frac{1}{1-x}}{\frac{1}{1-x}}{\!n}, \\
\Fiii(x,y)&=\sum_{n\ge 0}\left(\frac{1-x}{1-y}\right)^{n+1}
\poch{1-x}{1-x}{n},\\
\Gi(x,y)&=\sum_{n\ge 0}(-1)^n\poch{\frac{1}{1-y}}{\frac{1}{1-x}}{\!n},\\
\Gii(x,y)&=\sum_{n\ge 0}(1-y)(1-x)^n\poch{1-y}{1-x}{n}\poch{-(1-x)}{1-x}{n},
\text{ and}\\
\Giii(x,y)&=\sum_{n\ge 0}\left(\frac{1-x}{1-y}\right)^n\poch{1-y}{(1-x)^2}{n}\!.
\end{align*}
It is not hard to see that all the six infinite sums in these definitions are
convergent in the ring of formal power series in $x$ and $y$. For instance, to
see that the sum in the definition of $\Fi(x,y)$ is convergent, it suffices to
note that each monomial in the expansion of $\poch{1-y}{1-x}{n}$ has degree at
least~$n$.

Note that $\Fi(x,y)$ and $\Fiii(x,y)$ correspond to the two formulas for
the generating function of $f_{m,\ell}$ given in
\eqref{eq-f1} and \eqref{eq-f3},
respectively. In particular, it is known that $\Fi(x,y)=\Fiii(x,y)$.
Note also that $\Gi(x,y)$ is the generating function of $r_{m,\ell}$ given
in~\eqref{eq-rxy}, and that $\Giii(x,x)$ is Bala's formula~\eqref{eq-r2}.
In particular, Bala's conjecture corresponds to the
identity $\Gi(x,x)=\Giii(x,x)$.

The next theorem is our main result.

\begin{theorem}\label{thm-main} In the ring of formal power series in $x$ and
$y$, we have the identities
\begin{equation}
 \Fi(x,y)=\Fii(x,y)=\Fiii(x,y),\label{eq-thm1}
\end{equation}
and
\begin{equation}
 \Gi(x,y)=\Gii(x,y)=\Giii(x,y).\label{eq-thm2}
\end{equation}
\end{theorem}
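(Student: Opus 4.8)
The cleanest route is to substitute $q = 1-x$ and $p = 1-y$, so that every Pochhammer symbol becomes a genuine $q$-Pochhammer in the variables $p,q$; the six series become, respectively, $\sum \poch{p}{q}{n}$, $\sum p q^{-n}\poch{1/p}{1/q}{n}\poch{1/q}{1/q}{n}$, $\sum (q/p)^{n+1}\poch{q}{q}{n}$ for the $F$-family, and $\sum (-1)^n\poch{1/p}{1/q}{n}$, $\sum p q^n\poch{p}{q}{n}\poch{-q}{q}{n}$, $\sum (q/p)^n\poch{p}{q^2}{n}$ for the $G$-family. (A small bookkeeping point: one must check this change of variables is an invertible substitution on the level of formal power series — it is, since $x\mapsto 1-x$ is an involution and each series is a well-defined power series in $x,y$ by the degree argument given in the excerpt.) Since $\Fi=\Fiii$ is already known (it is \eqref{eq-f1}=\eqref{eq-f3}), for \eqref{eq-thm1} it suffices to prove $\Fi=\Fii$, and for \eqref{eq-thm2} one must prove both $\Gi=\Gii$ and $\Gii=\Giii$ (or $\Gi=\Giii$ and one of the others).

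The main tool I would use is Heine-type $q$-series manipulation, specifically the machinery of basic hypergeometric transformations together with the technique of writing a ${}_2\phi_1$ or ${}_3\phi_2$ in two different ways. The heart of the matter is that each of these sums, after the substitution, has the shape of a (possibly terminating-in-disguise or bilateral-looking) ${}_2\phi_1$ or ${}_3\phi_1$ evaluated at a special argument; the identities $\Fi=\Fii$ and $\Gi=\Gii$ look like instances of Heine's transformation or of the $q$-analogue of the Pfaff--Kummer / Euler transformation, while $\Gii=\Giii$ smells like a quadratic transformation (note the $\poch{p}{q^2}{n}$ with base $q^2$ on the right-hand side, which is the signature of a quadratic transformation such as the one relating ${}_2\phi_1(a,b;c;q)$ to a series in base $q^2$). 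Concretely, the plan is: (i) recognize $\Fi$ as essentially Zagier's / Kitaev--Remmel's series and rewrite it via a partial-fraction or telescoping identity of the kind used to pass from \eqref{eq-f3} to \eqref{eq-f1}; (ii) expand $\poch{1/p}{1/q}{n}$ using the $q$-binomial theorem to convert the "reciprocal" Pochhammers into ordinary ones, which is exactly what produces the extra $\poch{1/q}{1/q}{n}$ factor and the $pq^{\pm n}$ weights; (iii) interchange the order of summation in the resulting double sum and recognize the inner sum as a ${}_2\phi_1$ that can be evaluated by the $q$-Gauss sum or transformed by Heine; (iv) for the quadratic identity $\Gii=\Giii$, pair up consecutive terms (the alternating sign $(-1)^n$ in $\Gi$ and the $\poch{-q}{q}{n}$ in $\Gii$ strongly suggest grouping $n=2k$ with $n=2k+1$), using $\poch{-q}{q}{n}\poch{q}{q}{n} = \poch{q^2}{q^2}{n}$-type collapses to drop to base $q^2$.

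**The main obstacle.**
The delicate point is convergence and the meaning of the identities: these are \emph{formal} power series identities at $p=q=1$ (equivalently $x=y=0$), but the individual $q$-hypergeometric transformations I want to invoke are classically stated for $|q|<1$ and for convergent series, whereas here $q=1-x$ is a formal variable near $1$ and series like $\sum \poch{1/p}{1/q}{n}$ only make sense after the reorganization into a power series in $x,y$. So the real work — and the step I expect to be hardest — is to carry out each transformation at the level of formal power series in $x$ and $y$, justifying every interchange of summation by the degree/valuation estimates (each monomial in $\poch{1-y}{1-x}{n}$ has degree $\ge n$, etc.), rather than appealing to analytic convergence. A clean way to handle this is to prove a finite (truncated) version of each identity — an identity of polynomials or of partial sums modulo $(x,y)^N$ — and then pass to the limit; alternatively, one fixes the total degree $N$ and notes that only finitely many $n$ contribute mod $(x,y)^{N+1}$, reducing everything to a finite algebraic identity that can be checked by the terminating $q$-series transformations (Pfaff--Saalschütz, $q$-Gauss, etc.). I would structure the proof as a sequence of lemmas, one per equality ($\Fi=\Fii$, $\Gi=\Gii$, $\Gii=\Giii$), each reduced to a terminating basic-hypergeometric identity, with a preliminary lemma establishing the formal-power-series framework so that all the manipulations are legitimate; the remark in the excerpt about the identities also holding at other roots of unity should then fall out because the same truncated algebraic identities hold with $1-x$ replaced by $\zeta(1-x)$ for a root of unity $\zeta$.
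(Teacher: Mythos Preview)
Your instinct to reach for basic hypergeometric transformations is right, and your diagnosis of a quadratic phenomenon in the $G$-family (the base $q^2$ in $\Giii$, the pairing of even and odd terms in $\Gi$) is exactly on target. But the specific transformations you name --- Heine, $q$-Gauss, Pfaff--Kummer/Euler --- are identities for genuine ${}_2\phi_1$ series, whose $n$-th term carries a factor $1/\poch{q}{q}{n}$. None of the six series here has that factor: the summands are of the shape $\poch{A}{q}{n}\,t^n$ or $\poch{A}{q}{n}\poch{B}{q}{n}\,t^n$ with no $\poch{q}{q}{n}$ in the denominator, and such sums are divergent in the $|q|<1$ regime where Heine lives. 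So the step ``recognize the inner sum as a ${}_2\phi_1$ that can be evaluated by $q$-Gauss or transformed by Heine'' will not go through as stated; after expanding $\poch{1/p}{1/q}{n}$ by the terminating $q$-binomial theorem and interchanging, the inner sum you obtain still lacks the $\poch{q}{q}{n}$ denominator.

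The identity tailored to precisely this shape is the Rogers--Fine identity
\[
\sum_{n\ge 0}\frac{\poch{aq}{q}{n}}{\poch{bq}{q}{n}}\,t^n
=\sum_{n\ge 0}\frac{\poch{aq}{q}{n}\poch{atq/b}{q}{n}\,b^n t^n q^{n^2}\bigl(1-atq^{2n+1}\bigr)}{\poch{bq}{q}{n}\poch{t}{q}{n+1}},
\]
and the paper's proof runs entirely through it. The device that does the real work is to introduce an auxiliary parameter $z$ by choosing $b$ and $t$ proportional to $1/z$ (e.g.\ $a=(1-y)/(1-x)$, $b=(1-y)/((1-x)z)$, $t=r/z$, $q=1-x$), verify that both sides are then well-defined formal power series in $x,y,z$, and set $z=0$. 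The denominators $\poch{bq}{q}{n}$ and $\poch{t}{q}{n+1}$, which would ordinarily obstruct any formal interpretation, collapse to signed monomials at $z=0$, and what survives is the single identity
\[
\sum_{n\ge 0} r^n\poch{\tfrac{1}{1-y}}{\tfrac{1}{1-x}}{n}
=\sum_{n\ge 0}(1-y)(1-x)^n\poch{1-y}{1-x}{n}\poch{r(1-x)}{1-x}{n}.
\]
Taking $r=-1$ gives $\Gi=\Gii$; taking $r=1$ followed by the involutive substitution $x\mapsto -x/(1-x)$, $y\mapsto -y/(1-y)$ gives $\Fi=\Fii$. A second substitution into Rogers--Fine, now with base $q=(1-x)^{-2}$ (the quadratic choice you anticipated) and the same $z\to 0$ limit, yields $\Gi=\Giii$; your observation that $\sum_n(-1)^n(\cdot)_n$ telescopes in pairs to $\sum_k\bigl[(\cdot)_{2k}-(\cdot)_{2k+1}\bigr]$ is exactly how the resulting expression is identified with~$\Gi$. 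Your worries about formal convergence are handled in the paper just as you propose, by degree estimates on the summands; and the extension to other roots of unity does indeed fall out, for the reason you give.
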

As we pointed out in the introduction, the equality $\Fi(x,y)=\Fiii(x,y)$ has
been previously proven by the combined results of Kitaev and
Remmel~\cite{KitaevRemmel}, Levande~\cite{Levande}, Yan~\cite{Yan} and
Zagier~\cite{Zagier}. We decided to include $\Fiii$ in the statement of
Theorem~\ref{thm-main} anyway, for comparison with the identities involving
the~$G_i$'s.

Note that $\Fi(x,x)$ is obviously equal to $\Fiii(x,x)$, but the remaining
identities of Theorem~\ref{thm-main} remain non-trivial even when restricted to
the case of $x=y$.

By setting $p=1/(1-y)$ and $q=1/(1-x)$ in \eqref{eq-thm1}, and $p=1-y$ and
$q=1-x$ in \eqref{eq-thm2}, the identities of Theorem~\ref{thm-main} can be
expressed concisely as
\begin{align}
\sum_{n\ge 0} \poch{\frac{1}{p}}{\frac{1}{q}}{\!\!n}=
\sum_{n\ge 0} pq^n\poch{p}{q}{n}\poch{q}{q}{n} 
= \sum_{n\ge0}
\left(\frac{p}{q}\right)^{n+1}\poch{\frac{1}{q}}{\frac{1}{q}}{\!\!n}, \text{ and}
\label{eq-comp1}\\
 \sum_{n\ge 0} (-1)^n\poch{\frac{1}{p}}{\frac{1}{q}}{\!\!n}=
\sum_{n\ge 0} pq^n\poch{p}{q}{n}\poch{-q}{q}{n} 
= \sum_{n\ge0} \left(\frac{q}{p}\right)^n\poch{p}{q^2}{n}. 
\label{eq-comp2}
\end{align}
Note, however, that the expressions in~\eqref{eq-comp1} and \eqref{eq-comp2} are
in general not power series in $p$ and $q$; they should instead be understood as
power series in variables $p-1$ and $q-1$ to make the identities meaningful. 

In fact, the identities~\eqref{eq-comp1} and \eqref{eq-comp2} can be
interpreted in a broader way. If $p$ and $q$ are complex values such that
$pq^{2k}=1$ for some integer $k$, then all the three summations 
in~\eqref{eq-comp2} involve only finitely many nonzero summands, and the sums
are therefore well defined. A straightforward adaptation of our proof of
Theorem~\ref{thm-main} then shows that the values of the three sums are equal.
Moreover, if we consider complex values $p_0$ and $q_0$ such that
$p_0q_0^{2k}=1$ for infinitely many integers $k$, then one may easily check that
all the three summations in \eqref{eq-comp2} are convergent as power series in
$(p-p_0)$ and $(q-q_0)$. An appropriate adaptation of our proof then shows that
the three power series are equal.

With the identities in~\eqref{eq-comp1}, we need to be more careful. The
left equality is again valid for those values of $p$ and $q$ for which the sums
are both terminating, i.e., for values that satisfy $pq^{k}=1$ for an
integer~$k$. And moreover, if $p_0$ and $q_0$ satisfy $p_0q_0^k=1$ for
infinitely many integers $k$, the two expressions are equal as power series in
$(p-p_0)$ and $(q-q_0)$. This may again be proven by a straightforward
modification of our proof.

On the other hand, it is not clear whether the identities involving the
right-hand side of ~\eqref{eq-comp1}, i.e., the  expression
$\Fiii(1-q,1-p)=\sum_{n\ge0} \left(p/q\right)^{n+1}\poch{1/q}{1/q}{n}$, can also
be extended to other complex values of $p$ and~$q$. Since the equality of
$\Fi(x,y)$ and $\Fiii(x,y)$ is based on the combinatorial interpretation of the
two expressions as generating functions, the proof is only applicable to
expansions in powers of $(p-1)$ and $(q-1)$. However, we conjecture that even
this last equality can be extended to those values where the two sides are
defined (see Conjecture~\ref{conj-f3} for a precise statement). 

We remark that the expression $\sum_{n\ge 0}
\left(p/q\right)^{n+1}\poch{1/q}{1/q}{n}$ on the right-hand side
of~\eqref{eq-comp1} also admits a combinatorially
meaningful expansion into powers of $p$ and $1/q$. More precisely, it is not
hard to see that the expression equals $\sum_{r,s\ge 1} a_{r,s} p^r q^{-s}$
where $a_{r,s}$ is the difference between the number of partitions of $s$
into an odd number of parts and the number of partitions of $s$ into an even
number of parts, where we only consider partitions into distinct parts whose
largest part is~$r$. In particular, for $p=1$ we get the following well known
identities (see e.g. Corollary 1.7 on p. 11, and Ex. 10 on p. 29
in~\cite{partitions}):
\[
 \sum_{n\ge 0} \frac{1}{q^{n+1}} \poch{\frac{1}{q}}{\frac{1}{q}}{\!n}=1-\prod_{n\ge 1}(1-{q^{-n}})=
1-\sum_{n=-\infty}^{\infty} (-1)^n q^{-n(3n-1)/2},
\]
where the second identity is a version of the classical Pentagonal Number
Theorem of Euler.

\subsection{Proof of Theorem~\ref{thm-main}}\label{sec-proof}
The proof of Theorem~\ref{thm-main} is based on the following identity, which
has been discovered by Rogers~\cite{rogers} and independently by
Fine~\cite[eq. (14.1)]{fine}.

\begin{theorem}[Rogers--Fine Identity] For $a$, $b$, $q$ and $t$ such that
$|q|<1$, $|t|<1$ and $b$ is not a negative power of $q$, we have 
 \begin{equation}
  \sum_{n\ge 0} \frac{\pochq{aq}{n}}{\pochq{bq}{n}}t^n=
\sum_{n\ge 0}\frac{\pochq{aq}{n}\pochq{\frac{atq}{b}}{n}b^n t^n
q^{n^2}(1-atq^{2n+1})}{\pochq{bq}{n}\pochq{t}{n+1}}
\label{eq-rf}
 \end{equation}
\end{theorem}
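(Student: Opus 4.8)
The Rogers--Fine identity is classical --- the paper quotes it rather than proving it --- so I will only outline how I would establish it. Write $F(a,b;t):=\sum_{n\ge0}\frac{\pochq{aq}{n}}{\pochq{bq}{n}}t^{n}$ for the left-hand side of~\eqref{eq-rf}; for $|q|<1$, $|t|<1$ and $b$ not a negative power of $q$ this is an analytic function of $t$, and in general it is a well-defined formal power series in $t$ with coefficients in $\mathbb{Q}(a,b,q)$. Following Fine, the plan is to derive a functional equation relating $F(a,b;t)$ to $F$ evaluated at shifted parameters, iterate it, and identify the resulting closed form with the right-hand side of~\eqref{eq-rf}.

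The functional equation comes from elementary manipulation of the series: split off the $n=0$ term, reindex $n\mapsto n+1$, use $\pochq{cq}{n+1}=(1-cq^{n+1})\pochq{cq}{n}$, and write $\frac{1-aq^{n+1}}{1-bq^{n+1}}=1+\frac{(b-a)q^{n+1}}{1-bq^{n+1}}$; this gives
\[
(1-t)\,F(a,b;t)=1+\frac{(b-a)qt}{1-bq}\,F(a,bq;qt).
\]
Iterating --- each application replaces $F(a,bq^{j};tq^{j})$ by $\frac{1}{1-tq^{j}}$ plus a multiple of $F(a,bq^{j+1};tq^{j+1})$, and carries an extra factor of order $q^{j}$, so the $n$-step remainder is of order $q^{n^{2}}$ (and is zero outright when the sum terminates) --- collapses $F$ to an explicit series. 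Collecting the accumulated Pochhammer symbols and powers of $q$, and then carrying out the regrouping that manufactures the numerator factors $1-atq^{2n+1}$ and promotes the exponent $\binom{n+1}{2}$ to $n^{2}$, turns this series into the right-hand side of~\eqref{eq-rf}. A more mechanical alternative is creative telescoping: the right side of~\eqref{eq-rf}, with its characteristic factor $1-atq^{2n+1}$ and denominator $\pochq{t}{n+1}$, is visibly a telescoping sum, so one may instead guess a hypergeometric certificate $G(n)$ --- a rational function of $q^{n}$ times the $n$-th summand --- with $G(n+1)-G(n)$ equal to the difference of the $n$-th summands on the two sides of~\eqref{eq-rf}, and sum over $n\ge0$ with vanishing boundary terms.

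The step I expect to be the real obstacle is the regrouping at the end of the iteration. A single naive pass through the functional equation above yields the closed form $\sum_{n\ge0}\frac{(-a)^{n}t^{n}q^{\binom{n+1}{2}}\pochq{b/a}{n}}{\pochq{t}{n+1}\pochq{bq}{n}}$, which is \emph{equal} to the right-hand side of~\eqref{eq-rf} but is not literally that expression; passing from one to the other requires either combining this functional equation with a second one (Fine uses more than one) or invoking a Heine-type transformation, and keeping the book-keeping of the various $q$-Pochhammer symbols and quadratic exponents straight through that step is the delicate part. The only analytic ingredient in either approach is the vanishing of the iteration remainder, respectively the telescoping boundary term, which is trivial when the sums terminate and a routine estimate when $|q|,|t|<1$; everything else is formal algebra with $q$-Pochhammer symbols.
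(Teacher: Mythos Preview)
Your reading is correct: the paper does not prove the Rogers--Fine identity at all, but merely quotes it from Rogers~\cite{rogers} and Fine~\cite{fine} as a tool for the proof of Theorem~\ref{thm-main}. So there is no ``paper's own proof'' to compare against, and your decision to supply only an outline is appropriate.

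The outline itself is sound. The functional equation $(1-t)F(a,b;t)=1+\dfrac{(b-a)qt}{1-bq}F(a,bq;qt)$ is derived exactly as you describe, and iterating it does produce the series $\sum_{n\ge0}\dfrac{(-a)^{n}t^{n}q^{\binom{n+1}{2}}\pochq{b/a}{n}}{\pochq{t}{n+1}\pochq{bq}{n}}$ you write down; this is Fine's equation~(12.2). You are also right that this is not yet literally~\eqref{eq-rf}: Fine obtains the form with the factor $1-atq^{2n+1}$ by combining this with a \emph{second} functional equation (his (6.1)--(6.3), leading to (14.1)), and that combination is indeed the bookkeeping-heavy step. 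Your alternative suggestion of a telescoping/certificate argument is also viable. The convergence/remainder remarks are accurate under the stated hypotheses $|q|<1$, $|t|<1$. In short, the proposal is a faithful sketch of Fine's classical argument, with the genuine difficulty honestly flagged; there is nothing to correct.
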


We now show how Theorem~\ref{thm-main} follows from the Rogers--Fine Identity.
Since $\Fi$ and $\Fiii$ are already known to be equal, we only need
to show that $\Fi$ equals $\Fii$, and that $\Gi$, $\Gii$ and $\Gii$ are all
equal. As a first step, we derive a general power series identity which  directly
implies both $\Fi=\Fii$ and $\Gi=\Gii$. 

\begin{proposition}\label{pro-12} For any $r$, we have the following identity
of formal power series in $x$ and $y$:
\begin{equation}
 \sum_{n\ge 0} \poch{\frac{1}{1-y}}{\frac{1}{1-x}}{\!n}r^n=\sum_{n\ge 0}
(1-y)(1-x)^n\poch{1-y}{1-x}{n}\poch{r(1-x)}{1-x}{n}.\label{eq-12}
\end{equation}
\end{proposition}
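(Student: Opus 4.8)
The plan is to recognize both sides of~\eqref{eq-12} as instances of the Rogers--Fine Identity~\eqref{eq-rf}, and to match them by a careful substitution of the parameters $a$, $b$, $t$ and $q$. The left-hand side $\sum_{n\ge0}\poch{\tfrac{1}{1-y}}{\tfrac{1}{1-x}}{n}r^n$ should, after expanding the Pochhammer product as a ratio $\pochq{aq}{n}/\pochq{bq}{n}$ times a power of some variable, become the left side of~\eqref{eq-rf} with base $q$ replaced by $\tfrac{1}{1-x}$ (call it $Q$), with $b$ chosen so that $\pochq{bQ}{n}=1$ (i.e.\ $b=0$, or more precisely the limiting case where the denominator Pochhammer is trivial), with $aQ = \tfrac{1}{1-y}$, and with the free parameter $t$ absorbing both $r$ and the factor $Q^{\binom{n}{2}}$ coming from $\poch{\tfrac{1}{1-y}}{Q}{n}=\pochq{aQ}{n}$ only if we instead write things in base $Q$ directly. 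The cleanest route is: set $q:=\tfrac{1}{1-x}$ in~\eqref{eq-rf}, and then choose $a$, $b$, $t$ so that the left side of~\eqref{eq-rf} equals the left side of~\eqref{eq-12}; the right side of~\eqref{eq-rf} will then automatically be some series, and the task reduces to checking that this series matches the right side of~\eqref{eq-12} after simplification.

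Concretely, I would first rewrite the right side of~\eqref{eq-12} as $(1-y)\sum_{n\ge0} \poch{1-y}{1-x}{n}\poch{r(1-x)}{1-x}{n} (1-x)^n$, and observe that with base $1-x$ this is of Rogers--Fine shape with numerator Pochhammers and a denominator that is trivial---so the \emph{right} side of~\eqref{eq-12} looks like the right side of~\eqref{eq-rf} read with base $q' = 1-x$, some $a'$, some $b'$, some $t'$, after matching $\poch{1-y}{1-x}{n}=\pochq{a'q'}{n}$ (so $a'q'=1-y$), matching $\poch{r(1-x)}{1-x}{n}=\pochq{\tfrac{a't'q'}{b'}}{n}$, and matching the surviving powers of $q'$, $b'$, $t'$ against the explicit factors $(1-x)^n$, $(1-x)^{n^2}$ (there is no $q^{n^2}$ visible, which forces a specific degenerate choice of $b'$ or $t'$ to kill it), and the factor $(1-atq^{2n+1})$, which must collapse to a constant. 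This matching of parameters---in particular making the $q^{n^2}$ term and the $(1-atq^{2n+1})$ factor disappear on the $\Gii/\Fii$ side while still leaving a legitimate (possibly limiting) instance of Rogers--Fine---is the main obstacle, since Rogers--Fine as stated requires $b$ not a negative power of $q$ and $|q|,|t|<1$, none of which hold literally here; the resolution is that~\eqref{eq-12} is an identity of formal power series in $x$ and $y$, so I would justify the parameter specialization by first proving~\eqref{eq-rf} holds as a formal identity in an auxiliary indeterminate and then substituting, or by a direct coefficient-comparison / analytic-continuation argument noting that both sides of~\eqref{eq-12} are honest power series in $x,y$ and the desired identity, once verified on a set with an accumulation point (e.g.\ along a curve where the Rogers--Fine hypotheses do hold), extends by the identity theorem.

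Once the parameters are pinned down, the remaining work is bookkeeping: substitute, use $\pochq{a}{n} = (-a)^n q^{\binom n2}\pochq{1/a}{n}$-type reflection formulas and the elementary relation $\tfrac{1}{(1-x)}=q$ to convert between Pochhammers in base $1-x$ and base $\tfrac{1}{1-x}$, and simplify the powers of $(1-x)$, $(1-y)$ and $r$ on both sides until they agree termwise. I expect the identity $\poch{\tfrac1a}{\tfrac1q}{n} = (-1)^n a^{-n} q^{-\binom n2}\poch{a}{q}{n}$ (and its analogue relating $\tfrac{1}{1-y}$-Pochhammers to $1-y$-Pochhammers) to be the workhorse that turns the ``inverted'' left side of~\eqref{eq-12} into the ``non-inverted'' right side. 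After~\eqref{eq-12} is established, $\Fi=\Fii$ follows by taking $r=1$ (matching $\Fi(x,y)=\sum_n\poch{1-y}{1-x}{n}$ against the right side, after one more application of the reflection formula to see $\sum_n\poch{\tfrac1{1-y}}{\tfrac1{1-x}}{n}$ is, up to the already-known $\Fi=\Fiii$ bridge, the correct object), and $\Gi=\Gii$ follows by taking $r=-1$; the third identities $\Fii=\Fiii$ and $\Gii=\Giii$ would be handled separately, presumably by a second, independent application of Rogers--Fine with a different parameter choice, but that is outside the scope of Proposition~\ref{pro-12} itself.
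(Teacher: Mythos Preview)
Your overall strategy---specialize Rogers--Fine and take a degenerate limit---matches the paper's, but the proposal stops short of the key step and the one concrete suggestion you make (take base $Q=\tfrac{1}{1-x}$ and set $b=0$) does not finish the job. With $b\to 0$ the left side of~\eqref{eq-rf} indeed becomes $\sum_n\poch{\tfrac{1}{1-y}}{\tfrac{1}{1-x}}{n}t^n$, but on the right the factor $\pochq{atq/b}{n}b^n$ survives only as $(-atq)^n q^{\binom n2}$, and you are still left with $q^{n^2}$, with $(1-atq^{2n+1})$, and with $\pochq{t}{n+1}$ in the denominator; none of these collapse under $b\to 0$ alone, so the right side does not reduce to $(1-y)(1-x)^n\poch{1-y}{1-x}{n}\poch{r(1-x)}{1-x}{n}$. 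You correctly flag this as ``the main obstacle'' but do not resolve it.

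The paper's resolution is a specific double limit, implemented through an auxiliary indeterminate~$z$: work in base $q=1-x$ (not $\tfrac{1}{1-x}$), and set $a=\tfrac{1-y}{1-x}$, $b=\tfrac{1-y}{(1-x)z}$, $t=\tfrac{r}{z}$. Then $atq/b=r(1-x)$ stays finite, while $b$ and $t$ both tend to infinity as $z\to 0$; the denominators $z^n\pochq{(1-y)/z}{n}$ and $\pochq{r/z}{n+1}$ become pure monomials in that limit, and the factors $q^{n^2}$ and $(1-atq^{2n+1})$ combine with them to leave exactly $(1-y)(1-x)^n$. Crucially, the paper checks that after this substitution both sides of~\eqref{eq-rf} are honest formal power series in $x,y,z$, so evaluating at $z=0$ is legitimate. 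Your phrase ``formal identity in an auxiliary indeterminate and then substituting'' gestures in this direction but does not pin down that the auxiliary variable must enter through $b$ and $t$ simultaneously with $t/b$ held fixed.

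A smaller point: to get $\Fi=\Fii$ from~\eqref{eq-12} you do not need the $\Fi=\Fiii$ bridge or reflection formulas; the paper simply takes $r=1$ and substitutes $x\mapsto -x'/(1-x')$, $y\mapsto -y'/(1-y')$, which interchanges $1-x$ with $\tfrac{1}{1-x'}$ and turns~\eqref{eq-12} directly into $\Fi(x',y')=\Fii(x',y')$.
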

\begin{proof}
Let us substitute $a=\frac{1-y}{1-x}$, $b=\frac{1-y}{(1-x)z}$, $t=r/z$ and
$q=1-x$ into the Rogers--Fine identity, to obtain
\begin{multline}
 \sum_{n\ge 0} \frac{\pochq{1-y}{n}}{z^n\pochq{\frac{1-y}{z}}{n}}r^n=\\
\sum_{n\ge 0} \frac{\pochq{1-y}{n}\pochq{r(1-x)}{n}
r^n(1-y)^n (1-x)^{n^2-n}\left(z-r(1-y)(1-x)^{2n}\right)}
{z^{2n+1}\pochq{\frac{1-y}{z}}{n}\pochq{\frac{r}{z}}{n+1}}.
\label{eq-sub}
\end{multline}

Let $L(x,y,z)$ and $R(x,y,z)$ denote respectively the left-hand side and the
right-hand side of~\eqref{eq-sub}. Let us verify that both $L(x,y,z)$ and
$R(x,y,z)$ are well defined as formal power series in $x$, $y$ and $z$. To see
that $L(x,y,z)$ is well defined, we first note that the denominator
$z^n\pochq{(1-y)/z}{n}$ of the $n$-th summand on the left-hand side of
\eqref{eq-sub} is a polynomial in $x$, $y$ and $z$ with nonzero constant term,
showing that each summand can be expanded into a power series. It remains to
verify that the sum on the left-hand side of $\eqref{eq-sub}$ is convergent in
the ring of formal power series. This follows from the fact that every monomial
$x^iy^j$ appearing with nonzero coefficient in the expansion of
$\poch{1-y}{1-x}{n}$ satisfies $i+j\ge n$, and therefore a monomial $x^iy^jz^k$
may only appear with nonzero coefficient in the first $i+j$ summands of
$L(x,y,z)$. Thus, $L(x,y,z)$ is well defined. The same reasoning applies to
$R(x,y,z)$ as well.

We now set $z=0$ in $L$ and $R$, to obtain
\begin{align*}
 L(x,y,0)&=\sum_{n\ge0}\frac{\pochq{1-y}{n}}{(-1)^n(1-y)^n(1-x)^{\binom{n}{2}}}
r^n\\
&=\sum_{n\ge0}r^n(-1)^n\prod_{k=1}^n\left(\frac{1}{(1-y)(1-x)^{k-1}}-1\right) \\
&=\sum_{n\ge0}\poch{\frac{1}{1-y}}{\frac{1}{1-x}}{\!n}r^n
\intertext{and}
 R(x,y,0)&=\sum_{n\ge
0}\frac{-\pochq{1-y}{n}\pochq{r(1-x)}{n}r^{n+1}(1-y)^{n+1} (1-x)^{n^2+n}}
{(-1)^{2n+1}r^{n+1}(1-y)^n(1-x)^{n^2}}\\
&=\sum_{n\ge0}(1-y)(1-x)^n\pochq{1-y}{n}\pochq{r(1-x)}{n},
\end{align*}
as claimed.
\end{proof}

\begin{corollary}\label{cor-12}
 $\Fi(x,y)=\Fii(x,y)$ and $\Gi(x,y)=\Gii(x,y)$.
\end{corollary}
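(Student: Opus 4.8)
The plan is to derive both identities of Corollary~\ref{cor-12} from the single identity~\eqref{eq-12} of Proposition~\ref{pro-12} by specializing the free parameter $r$, using a preliminary change of variables in the case of the $F$-series. I treat~\eqref{eq-12} as an identity of formal power series in $x$ and $y$ that is valid for every choice of $r$ (equivalently, as an identity in the formal power series ring with $r$ adjoined as an extra variable), so that both substituting a numerical value for $r$ and applying a change of variables in $x,y$ are legitimate operations.

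For the identity $\Gi(x,y)=\Gii(x,y)$, I would simply set $r=-1$ in~\eqref{eq-12}. The left-hand side then becomes $\sum_{n\ge0}(-1)^n\poch{\frac1{1-y}}{\frac1{1-x}}{n}$, which is exactly $\Gi(x,y)$, while the right-hand side becomes $\sum_{n\ge0}(1-y)(1-x)^n\poch{1-y}{1-x}{n}\poch{-(1-x)}{1-x}{n}$, which is exactly $\Gii(x,y)$. This step is immediate once Proposition~\ref{pro-12} is available.

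For the identity $\Fi(x,y)=\Fii(x,y)$, the point is that the Pochhammer symbols in $\Fi$ and $\Fii$ involve $1-x$ and $1-y$ where those in~\eqref{eq-12} involve $\frac1{1-x}$ and $\frac1{1-y}$, and vice versa. I would therefore first apply to~\eqref{eq-12} the substitution $x\mapsto\frac{-x}{1-x}$, $y\mapsto\frac{-y}{1-y}$. Since $\frac{-x}{1-x}=-x-x^2-\cdots$ and $\frac{-y}{1-y}=-y-y^2-\cdots$ are formal power series with zero constant term, this substitution is a continuous endomorphism of $\mathbb{Q}[[x,y]]$ (in fact an involution), so it carries~\eqref{eq-12} to a valid identity; under it $1-x\mapsto\frac1{1-x}$ and $1-y\mapsto\frac1{1-y}$, so every occurrence of $1-x$ and of $\frac1{1-x}$ (and likewise for $y$) gets swapped. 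Setting $r=1$ in the resulting identity, the left-hand side becomes $\sum_{n\ge0}\poch{1-y}{1-x}{n}=\Fi(x,y)$ and the right-hand side becomes $\sum_{n\ge0}\frac1{(1-y)(1-x)^n}\poch{\frac1{1-y}}{\frac1{1-x}}{n}\poch{\frac1{1-x}}{\frac1{1-x}}{n}=\Fii(x,y)$, as desired.

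The only points requiring (routine) care are that the sums remain summable in $\mathbb{Q}[[x,y]]$ after these specializations — which holds because $\poch{1-y}{1-x}{n}$ has order at least $n$, while on the $\Fii$ side each $n$-th summand has order at least $2n$ — and that the change of variables really is an automorphism of the power series ring; both are standard. I do not expect any genuine obstacle here: the substance of the corollary is entirely contained in Proposition~\ref{pro-12}, and the only mildly non-obvious ingredient is recognizing that the involution $x\mapsto-x/(1-x)$ is exactly what converts~\eqref{eq-12} into the statement $\Fi=\Fii$.
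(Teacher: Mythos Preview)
Your proof is correct and follows essentially the same approach as the paper: set $r=-1$ for the $G$-identity, and for the $F$-identity apply the involutive substitution $x\mapsto -x/(1-x)$, $y\mapsto -y/(1-y)$ (which interchanges $1-x$ with $1/(1-x)$) together with $r=1$. The extra care you take in justifying summability and the validity of the change of variables is routine detail that the paper leaves implicit.
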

\begin{proof}
Taking $r=-1$ in \eqref{eq-12} shows that $\Gi(x,y)=\Gii(x,y)$. By taking
$r=1$ and substituting $x=-x'/(1-x')$ and $y=-y'/(1-y')$ in~\eqref{eq-12}, we
prove that $\Fi(x',y')=\Fii(x',y')$.
\end{proof}

\begin{lemma}\label{lem-g13}
 $\Gi(x,y)=\Giii(x,y)$.
\end{lemma}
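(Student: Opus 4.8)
The plan is to prove the identity $\Gi(x,y)=\Giii(x,y)$ by another application of the Rogers--Fine identity, this time with a substitution chosen so that setting an auxiliary variable to $0$ picks out the alternating-sign Pochhammer sum on one side and a $((1-x)/(1-y))^n\poch{1-y}{(1-x)^2}{n}$ sum on the other. Recall that $\Gi(x,y)=\sum_{n\ge0}(-1)^n\poch{\frac1{1-y}}{\frac1{1-x}}{n}$, which we have already rewritten in the proof of Proposition~\ref{pro-12} (via $L(x,y,0)$ with $r=-1$) as $\sum_{n\ge0}\frac{\pochq{1-y}{n}}{(-1)^n(1-y)^n(1-x)^{\binom n2}}(-1)^n = \sum_{n\ge0}\frac{\pochq{1-y}{n}}{(1-y)^n(1-x)^{\binom n2}}$. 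The target $\Giii(x,y)=\sum_{n\ge0}\left(\frac{1-x}{1-y}\right)^n\poch{1-y}{(1-x)^2}{n}$ has $q$-Pochhammer symbols in base $(1-x)^2$, so the natural move is to run Rogers--Fine with $q=(1-x)^2$ rather than $q=1-x$.

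First I would look for parameters $a,b,t$ so that, after substituting $q=(1-x)^2$ into~\eqref{eq-rf} and introducing a dummy variable $z$ as in the proof of Proposition~\ref{pro-12}, the $z\to 0$ limit of the left-hand side collapses the ratio $\pochq{aq}{n}/\pochq{bq}{n}$ to something proportional to $\pochq{1-y}{n}/((1-y)^n(1-x)^{n(n-1)})$ — matching $\Gi$ — while the $z\to 0$ limit of the right-hand side produces $\left(\frac{1-x}{1-y}\right)^n\pochq{1-y}{(1-x)^2}{n}$. Concretely I expect a substitution of the shape $a$ proportional to $1/(1-y)$ times a power of $(1-x)$, $b$ proportional to $a/z$ (so that the $z^{-n}$ from $1/\pochq{bq}{n}$ supplies the needed large-$n$ power of $z$ and the finite part of $\pochq{bq}{n}$ tends to $1$), and $t$ proportional to $r/z$ with $r$ eventually specialized. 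The factor $q^{n^2}=(1-x)^{2n^2}$ on the right of Rogers--Fine, together with the $b^n t^n$ and the $(1-atq^{2n+1})$ term, must combine to give exactly the power $(1-x)^{n(n-1)}$ cancellation and the linear factor that, at $z=0$, leaves a clean single Pochhammer; verifying that these powers of $(1-x)$ balance is the bookkeeping core of the argument, entirely parallel to the computation of $R(x,y,0)$ in Proposition~\ref{pro-12}.

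As in the earlier proof, I would first check that after the substitution both sides of the specialized Rogers--Fine identity are well-defined formal power series in $x$, $y$ and $z$: each summand expands because the relevant denominators ($z$-shifted Pochhammer symbols and $\pochq{t}{n+1}$) have nonzero constant term, and the sums converge because every monomial in $\poch{1-y}{(1-x)^2}{n}$ (or in the corresponding left-hand factor) has degree growing with $n$, so only finitely many summands contribute to any fixed monomial $x^iy^jz^k$. Then I set $z=0$ and read off $\Gi(x,y)$ on the left and $\Giii(x,y)$ on the right. I would also double-check the hypothesis of Rogers--Fine, namely that $b$ is not a negative power of $q$; since $b$ will carry a factor $1/z$ and $z$ is a formal variable, this is automatic in the formal-power-series setting.

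The main obstacle will be pinning down the exact constants in the substitution so that \emph{both} specializations come out simultaneously correct — in particular making the quadratic exponent $q^{n^2}$ on the right-hand side cancel against the $(1-x)^{-\binom n2}$-type factors to leave base-$(1-x)^2$ Pochhammer symbols with the right $\left(\frac{1-x}{1-y}\right)^n$ prefactor, rather than some stray power of $(1-x)$ or of $r$. I expect this to amount to solving two or three exponent-matching equations in the exponents of $(1-x)$, $(1-y)$, $z$ and $r$, after which the specialization $r=$ (some explicit constant, possibly $1$ or $-1$) finishes the proof; the remaining computation is routine Pochhammer manipulation of exactly the same flavor as the displayed evaluations of $L(x,y,0)$ and $R(x,y,0)$ above.
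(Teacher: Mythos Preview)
Your overall strategy---apply Rogers--Fine with $q$ a square so that base-$(1-x)^2$ Pochhammer symbols appear, introduce a dummy $z$, then set $z=0$---is exactly the paper's approach. But two concrete points in your plan are off, and the second one is the real content of the proof.

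First, you have the two sides swapped. With $q=(1-x)^{\pm 2}$, the left-hand side of Rogers--Fine carries a \emph{single} Pochhammer $\pochq{aq}{n}$ in base $q$; after the $z\to 0$ limit this naturally produces the base-$(1-x)^2$ object $\Giii$, not $\Gi$. There is no way to get the base-$(1-x)$ product $\poch{1-y}{1-x}{n}$ out of one base-$(1-x)^2$ Pochhammer of the same length $n$, so your stated expectation for the left-hand side cannot hold. In the paper the substitution is $a=(1-x)^2/(1-y)$, $b=(1-x)/z$, $t=1/z$, $q=(1-x)^{-2}$, and indeed $L'(x,y,0)=\Giii(x,y)$.

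Second---and this is the step your plan does not anticipate---the right-hand side of Rogers--Fine carries \emph{two} Pochhammer factors, $\pochq{aq}{n}$ and $\pochq{atq/b}{n}$, and with the paper's choice these become $\poch{\frac{1}{1-y}}{\frac{1}{(1-x)^2}}{n}$ and $\poch{\frac{1}{(1-y)(1-x)}}{\frac{1}{(1-x)^2}}{n}$. The point is that their product \emph{interleaves} into a single base-$(1-x)^{-1}$ Pochhammer of length $2n$:
\[
\poch{\tfrac{1}{1-y}}{\tfrac{1}{(1-x)^2}}{\!n}\,\poch{\tfrac{1}{(1-y)(1-x)}}{\tfrac{1}{(1-x)^2}}{\!n}=\poch{\tfrac{1}{1-y}}{\tfrac{1}{1-x}}{\!2n}.
\]
The remaining prefactor then gives
\[
R'(x,y,0)=\sum_{n\ge 0}\frac{1}{(1-y)(1-x)^{2n}}\poch{\tfrac{1}{1-y}}{\tfrac{1}{1-x}}{\!2n}
=\sum_{n\ge 0}\Bigl(\poch{\cdot}{\cdot}{2n}-\poch{\cdot}{\cdot}{2n+1}\Bigr)
=\sum_{n\ge 0}(-1)^n\poch{\tfrac{1}{1-y}}{\tfrac{1}{1-x}}{\!n}=\Gi(x,y).
\]
So the mechanism is not ``$q^{n^2}$ cancels to leave a clean single Pochhammer'' as in Proposition~\ref{pro-12}; rather, two half-step Pochhammers merge into one of double length, and a one-term telescoping converts the even-index sum into the alternating sum defining $\Gi$. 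Once you reverse the roles of the sides and build in this interleaving/telescoping step, your outline becomes the paper's proof.
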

\begin{proof}
We again use the Rogers--Fine identity. This time, we substitute
$a=(1-x)^2/(1-y)$, $b=(1-x)/z$, $q=(1-x)^{-2}$ and $t=1/z$.
This yields
\begin{multline}
 \sum_{n\ge
0}\frac{\pochq{\frac{1}{1-y}}{n}}{z^n\pochq{\frac{1}{z(1-x)}}{n}}\\
 =\sum_{n\ge0}
\frac{\pochq{\frac{1}{1-y}}{n}\pochq{\frac{1}{(1-y)(1-x)}}{n}(1-x)^{n-2n^2}
\left(z-\frac{1}{(1-y)(1-x)^{4n}}\right)}
{z^{2n+1}\pochq{\frac{1}{z(1-x)}}{n}\pochq{\frac{1}{z}}{n+1}}.
\label{eq-sub2}
\end{multline}

Let $L'(x,y,z)$ and $R'(x,y,z)$ denote the left-hand side and right-hand side
of $\eqref{eq-sub2}$, respectively. As with $L$ and $R$ in the proof of
Proposition~\ref{pro-12}, we easily observe
that $L'$ and $R'$ are formal power series in $x$, $y$ and~$z$. Putting $z$
equal to $0$, we get

\begin{align*}
 L'(x,y,0)&=\sum_{n\ge 0}\frac{\pochq{\frac{1}{1-y}}{n}}{
\prod_{k=1}^n-\frac{1}{(1-x)^{2k-1}}}\\
&=\sum_{n\ge 0}\prod_{k=1}^n\left(\frac{1-x}{1-y}-(1-x)^{2k-1}\right)\\
&=\Giii(x,y),
\intertext{and}
R'(x,y,0)&=\sum_{n\ge0}
\frac{-\pochq{\frac{1}{1-y}}{n}\pochq{\frac{1}{(1-y)(1-x)}}{n}\frac{1}{1-y}
(1-x)^{-2n^2-3n}}
{-(1-x)^{-2n^2-n}}\\
&=\sum_{n\ge0}\frac{1}{(1-y)(1-x)^{2n}}\poch{\frac{1}{1-y}}{\frac{1}{1-x}}{\!2n}
\\ 
&=\sum_{n\ge0}\left(\poch{\frac{1}{1-y}}{\frac{1}{1-x}}{\!2n}-\poch{\frac{1}{1-y}}
{\frac{1}{1-x}}{\!2n+1}\right)\\
&=\sum_{n\ge 0}(-1)^n \poch{\frac{1}{1-y}}{\frac{1}{1-x}}{\!n}\\
&=\Gi(x,y).\qedhere
\end{align*}
\end{proof}
Theorem~\ref{thm-main} is a direct consequence of Proposition~\ref{pro-12} and
Lemma~\ref{lem-g13}.

\section{A Generalization}

We are able to prove the following generalization of the Rogers--Fine
identity:

\begin{theorem}[Generalized Rogers--Fine Identity]
\label{thm-grf}
\begin{multline}
 \sum_{n\ge 0} \frac{\pochq{\frac{\beta\gamma}{\alpha qt}}{n}\pochq{\alpha}{n}}
{\pochq{\beta}{n}\pochq{\gamma}{n}} t^n\\
=\sum_{n\ge 0} \frac{\pochq{\frac{\alpha qt}{\beta}}{n}\pochq{\frac{\alpha
qt}{\gamma}}{n}\pochq{\alpha}{n}\left(1-\alpha tq^{2n}\right)(-1)^n
q^{\binom{n}{2}-n}\left(\frac{\beta\gamma}{\alpha}\right)^n}
{\pochq{\beta}{n}\pochq{\gamma}{n}\pochq{t}{n+1}}.
\label{eq-grf}
\end{multline}
\end{theorem}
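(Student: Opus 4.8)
The plan is to prove Theorem~\ref{thm-grf} by Fine's method of functional equations~\cite{fine}, taking the classical Rogers--Fine identity~\eqref{eq-rf} as the point of departure. Write $L$ and $R$ for the left-hand and the right-hand side of~\eqref{eq-grf}. As a preliminary, and exactly as with the auxiliary series in the proof of Proposition~\ref{pro-12}, one first checks that $L$ and $R$ are meaningful objects --- convergent series when $|q|<1$, $|t|<1$ and $\beta,\gamma$ steer clear of the obvious poles, and, after the substitutions that turn~\eqref{eq-grf} into a power-series statement in the style of Section~2, formal power series --- and this verification is routine. The substance of the theorem is the equality $L=R$ for generic analytic values of the parameters; once it holds there it propagates to every specialization, either by continuity or by comparing power-series coefficients.

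In the classical case the engine is that the summand of $\sum_{n\ge0}\poch{aq}{q}{n}\big/\poch{bq}{q}{n}\cdot t^n$ involves a \emph{single} $q$-linear factor upstairs and a single one downstairs: an elementary splitting --- for instance $\poch{aq}{q}{n}\big/\poch{bq}{q}{n}-\poch{aq}{q}{n-1}\big/\poch{bq}{q}{n-1}=(b-a)q^n\poch{aq}{q}{n-1}\big/\poch{bq}{q}{n}$ --- followed by a re-indexing produces a closed functional equation relating the series at $t$ to the same series at $tq$ with one lower parameter shifted by $q$; iterating this equation, and letting the shifted series collapse to its constant term, delivers the Rogers--Fine sum. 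My plan is to run the analogous argument for $L$. Now the summand is $\poch{\delta}{q}{n}\poch{\alpha}{q}{n}\big/\big(\poch{\beta}{q}{n}\poch{\gamma}{q}{n}\big)\cdot t^n$ with $\delta=\beta\gamma/(\alpha qt)$, a ratio of \emph{two} $q$-linear factors over two, so the same splitting leaves a remainder governed by the quantity $(\beta+\gamma-\alpha-\delta)q^{n-1}+(\alpha\delta-\beta\gamma)q^{2n-2}$, which has two terms rather than one. The decisive point is that the identity $\alpha\delta\cdot qt=\beta\gamma$ --- which is precisely what is built into the first Pochhammer --- is available to rewrite this remainder (note that $\alpha\delta-\beta\gamma=\beta\gamma(1-qt)/(qt)$ no longer involves $\alpha$), and I expect that after this rewriting the recursion again closes, in the shape $L(\alpha,\beta,\gamma;t)=(\text{rational in the parameters})+(\text{rational})\cdot L(\alpha,\beta q,\gamma q;tq)$. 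Iterating this equation $N$ times, discarding the remainder (which carries a factor $t^{N}$ together with a factor $q^{\binom{N}{2}}$ or larger and therefore tends to $0$), and assembling the accumulated constant terms, yields a series in $n$ that I then have to recognize as~$R$.

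That last recognition is where I expect the real difficulty. One must match the accumulated product of rational factors with the weight $\poch{\alpha qt/\beta}{q}{n}\poch{\alpha qt/\gamma}{q}{n}\poch{\alpha}{q}{n}\,(1-\alpha tq^{2n})\,(-1)^{n}q^{\binom{n}{2}-n}(\beta\gamma/\alpha)^{n}\big/\big(\poch{\beta}{q}{n}\poch{\gamma}{q}{n}\poch{t}{q}{n+1}\big)$, and in particular produce the very-well-poised factor $1-\alpha tq^{2n}$, which should emerge because the rewriting of the two-term remainder splits each accumulated term into two pieces sharing a common envelope; keeping track of the signs and the powers of $q$ across the iteration is the fiddly part. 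Should closing the functional equation directly prove awkward, the fallback is to reduce to the classical identity: expand the quotient $\poch{\delta}{q}{n}\big/\poch{\gamma}{q}{n}$ by the terminating $q$-Chu--Vandermonde sum (equivalently by the $q$-binomial theorem), interchange the summations, apply~\eqref{eq-rf} to the resulting inner series $\sum_{n}\poch{\alpha}{q}{n}\big/\poch{\beta}{q}{n}\cdot(\,\cdot\,)^{n}$, and re-sum the outer index --- whereupon the obstacle simply migrates to that re-summation, which is again where the constraint binding $\delta$ to $t$ must be used. Finally, two consistency checks guide, and ultimately confirm, all of the bookkeeping: letting $\gamma\to0$ in~\eqref{eq-grf} must reproduce~\eqref{eq-rf} with $aq=\alpha$, $bq=\beta$ (after the evident cancellations the $\gamma$-dependent parts on the two sides converge to matching limits), and setting $t=q$ makes the left-hand side a \emph{balanced} basic hypergeometric series, so that~\eqref{eq-grf} must then specialize to a known transformation of such series; since both limits are forced, between them they pin down every otherwise-ambiguous constant in the general statement.
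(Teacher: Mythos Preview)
Your approach is genuinely different from the paper's, and the contrast is instructive. The paper does not attempt to derive~\eqref{eq-grf} from scratch: it simply quotes Watson's $q$-analogue of Whipple's transformation (eq.~(3.4.1.5) in Slater), specializes $d=q$, lets the terminating parameter $f=q^{-N}$ tend to a limit as $N\to\infty$, and then makes the change of variables $a=\alpha t$, $b=\alpha qt/\beta$, $c=\alpha qt/\gamma$, $e=\alpha$; a final multiplication by $(1-\alpha t)/(1-t)$ gives~\eqref{eq-grf} verbatim. Thus the paper's ``proof'' is a three-line substitution into a classical identity, whereas you propose a self-contained derivation via Fine's iterative functional-equation method. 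Your route, if it succeeds, would be more elementary in that it avoids importing Watson's transformation as a black box; the paper's route is vastly shorter because Watson's identity already encodes the very-well-poised structure (in particular the factor $1-\alpha tq^{2n}$) that you correctly flag as the hard part of the bookkeeping.

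That said, what you have written is a plan rather than a proof, and the gap is precisely at the step you yourself identify. You state that after exploiting $\alpha\delta\cdot qt=\beta\gamma$ you ``expect'' the recursion to close in the form $L(\alpha,\beta,\gamma;t)=A+B\cdot L(\alpha,\beta q,\gamma q;tq)$, but you do not exhibit $A$ and $B$, and the two-term remainder you wrote down does not obviously collapse to a single multiplicative factor of this kind---at face value it gives a three-term relation, not a two-term one. Nor do you carry out the iteration or verify that the accumulated product matches the $n$-th summand of $R$. The fallback via $q$-Chu--Vandermonde has the same status: you describe where the difficulty migrates but do not resolve it there either. Your consistency checks (the $\gamma\to0$ limit and the balanced case $t=q$) are good sanity tests but do not by themselves determine the identity, since~\eqref{eq-grf} has more free parameters than those two specializations fix. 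In short, the strategy is plausible and your structural observation about the constraint on $\delta$ is on point, but the proof is not yet there; to complete it you must actually produce the closed recursion and perform the matching, or else follow the paper and invoke Watson.
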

To deduce the Rogers--Fine identity from Theorem~\ref{thm-grf}, simply take the
limit $\gamma\to 0$ and set $\alpha=aq$ and $\beta=bq$.
\begin{proof}[Proof of Theorem~\ref{thm-grf}]
Our argument is based on the following
identity of Watson (see e.g.~\cite[eq. (3.4.1.5.)]{sla}), valid for $f=q^{-N}$
with $|q|<1$ and $N$ a positive integer:
\begin{multline}
 \sum_{n\ge
0}\frac{\pochqn{a}\pochqn{b}\pochqn{c}\pochqn{d}\pochqn{e}\pochqn{f}(1-aq^{2n}
)(a^2q^2/bcdef)^n}{\pochqn{q}\pochqn{aq/b}\pochqn{aq/c}\pochqn{aq/d}
\pochqn{aq/e}\pochqn{aq/f}(1-a)}\\
=\frac{\pochq{aq}{N}\pochq{aq/de}{N}}{\pochq{aq/d}{N}\pochq{aq/e}{N}}\sum_{n\ge0
}\frac{\pochqn{aq/bc}\pochqn{d}\pochqn{e}\pochqn{f}q^n}{\pochqn{q}\pochqn{def/a}
\pochqn{aq/b}\pochqn{aq/c}}.
\label{eq-wat}
\end{multline}
In \eqref{eq-wat}, we put $d=q$ and take the limit as $N\to\infty$, to get
\begin{multline}
\sum_{n\ge0}\frac{\pochqn{b}\pochqn{c}\pochqn{e}q^{\binom{n}{2}-n}
(1-aq^ { 2n } )(-a^2/bce)^n }
{\pochqn{aq/b}\pochqn{aq/c}\pochqn{aq/e}(1-a)}\\
=\frac{1-a/e}{1-a}\sum_{n\ge0}\frac{\pochqn{aq/bc}\pochqn{e}(a/e)^n}{\pochqn{
aq/b} \pochqn{aq/c}}.
\label{eq-wat2}
\end{multline}
Putting $a=\alpha t$, $b=\alpha qt/\beta$, $c=\alpha qt/\gamma$, and
$e=\alpha$ in \eqref{eq-wat2}, and multiplying the resulting identity by
$(1-\alpha t)/(1-t)$, we obtain~\eqref{eq-grf}.
\end{proof}

From the generalized Rogers--Fine identity, we may deduce a generalization of
Proposition~\ref{pro-12} and Lemma~\ref{lem-g13}, by following the same
arguments we used to prove Proposition~\ref{pro-12} and Lemma~\ref{lem-g13}
from the Rogers--Fine identity. In particular, substituting $\alpha=1-y$,
$\beta=(1-y)/z$, $t=r/z$ and $q=1-x$ into~\eqref{eq-grf}, and taking $z=0$, 
shows that

\begin{multline*}
\sum_{n\ge
0}\frac
{\poch{\frac{\gamma}{r(1-x)}}{1-x}{\!n}\poch{\frac{1}{1-y}}{\frac{1}{1-x}}{\!n}}
{\poch{\gamma}{1-x}{n}}r^n\\
=\sum_{n\ge0}(1-y)(1-x)^n\frac{\poch{1-y}{1-x}{n}\poch{r(1-x)}{1-x}{n}}
{\poch{\gamma}{1-x}{n}}.
\end{multline*}

Similarly, by putting $\alpha=1/(1-y)$, $\beta=1/z(1-x)$,
$t=1/z$, and $q=1/(1-x)^2$, we get
\begin{multline*}
\sum_{n\ge0} (-1)^n
\frac{\poch{\frac{1}{1-y}}{\frac{1}{1-x}}{n}}
{\poch{\gamma}{\frac{1}{(1-x)^2}}{\lfloor \frac{n}{2}\rfloor}}
\\=\sum_{n\ge
0}\left(\frac{1-x}{1-y}\right)^n\frac{\poch{1-y}{(1-x)^2}{n}\poch{
\gamma(1-x)(1-y)}{\frac{1}{(1-x)^2}}{n}}
{\poch{\gamma}{\frac{1}{(1-x)^2}}{n}}.
\end{multline*}

Unfortunately, we are not able to find a combinatorial interpretation for
these generalized identities.

\section{Open Problems}

Let us recall the identities between the function $\Fiii$ and the two functions
$\Fi$ and $\Fii$. With the notation $p=1/(1-y)$ and $q=1/(1-x)$, the identities
may be stated as
\begin{align}
\sum_{n\ge 0} \poch{\frac{1}{p}}{\frac{1}{q}}{\!n}&=
\sum_{n\ge 0}
\left(\frac{p}{q}\right)^{n+1}\poch{\frac{1}{q}}{\frac{1}{q}}{\!n}, \text{ 
and}\label{eq-conj1}\\
\sum_{n\ge 0} pq^n\poch{p}{q}{n}\poch{q}{q}{n}&=
\sum_{n\ge 0}
\left(\frac{p}{q}\right)^{n+1}\poch{\frac{1}{q}}{\frac{1}{q}}{\!n}.
\label{eq-conj2}
\end{align}
By Theorem~\ref{thm-main}, the identities~\eqref{eq-conj1} and \eqref{eq-conj2}
are valid in the ring of power series in $(p-1)$ and $(q-1)$. However, it seems
that the identities are in fact valid for any value of $p$ and $q$ for which the
sums are terminating, and they also appear to be valid as power series in
$(p-p_0)$ and $(q-q_0)$ whenever the corresponding sums both converge as formal
power series. We state this more precisely in the next conjecture.
\begin{conjecture}\label{conj-f3}
If $p_0$ and $q_0$ are two complex $k$-th roots of unity for some $k$, then the
left-hand side and the right-hand side of \eqref{eq-conj1} converge to the same
power series in $(p-p_0)$ and $(q-q_0)$.

Similarly, if $q_0$ is a root of unity, then both sides of \eqref{eq-conj2}
converge to the same power series in $(q-q_0)$.
\end{conjecture}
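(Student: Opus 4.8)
The plan is to reduce Conjecture~\ref{conj-f3} to a single statement --- the identity $\Fii=\Fiii$, read at the relevant roots of unity --- and then to prove that statement by a \emph{base-independent} $q$-series argument. The point of departure is the observation, already made in the paper, that the identity $\Fi=\Fii$ (the left equality of~\eqref{eq-comp1} in the variables $p=1/(1-y)$, $q=1/(1-x)$) survives the passage to root-of-unity expansions: it is deduced from the Rogers--Fine identity, a purely algebraic identity of formal power series valid over any coefficient ring once the denominators $\poch{t}{q}{n+1}$ are units, by the substitution and specialization $z=0$ of Proposition~\ref{pro-12}, and exactly that computation is legitimate when $x,y$ are expanded around $x_0,y_0$ with $1-x_0,1-y_0$ roots of unity. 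Granting this, \eqref{eq-conj2} is precisely $\Fii=\Fiii$ (now an identity of power series in $q-q_0$ with coefficients in $\mathbb{C}[p]$), and \eqref{eq-conj1}, i.e.\ $\Fi=\Fiii$, follows by combining $\Fi=\Fii$ with $\Fii=\Fiii$.

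So the whole difficulty is concentrated in proving $\Fii=\Fiii$ at roots of unity, equivalently the known identity $\sum_n\poch{1-y}{1-x}{n}=\sum_n\bigl((1-x)/(1-y)\bigr)^{n+1}\poch{1-x}{1-x}{n}$ (the equality of~\eqref{eq-f1} and~\eqref{eq-f3}) for expansions around such a point. The obstruction is structural: unlike $\Fi=\Fii$, this identity has to our knowledge only been proved combinatorially --- by interpreting both sides as the generating function of Fishburn matrices refined by the number of maximal elements and matching coefficients via a weight-preserving bijection (or an equivalent sieve) --- and such a proof establishes the identity only in $\mathbb{C}[[x,y]]$, i.e.\ for the expansion at $x=y=0$, where the coefficients are the integers $f_{m,\ell}$; at a root of unity the ``coefficients'' are unrelated numbers with no combinatorial content, so the bijection says nothing. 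The plan is therefore to replace the combinatorial proof by an algebraic $q$-series derivation that realizes $\Fi$ (or $\Fii$) and $\Fiii$ as two specializations of one transformation of basic hypergeometric type --- the Rogers--Fine identity, Watson's transformation~\eqref{eq-wat}, a Bailey-pair argument, or, most promisingly, the Generalized Rogers--Fine Identity of Theorem~\ref{thm-grf} --- in the same spirit in which the paper handles $\Fi=\Fii$, $\Gi=\Gii$ and $\Gi=\Giii$. Any such derivation is valid over an arbitrary coefficient ring and hence transplants verbatim to expansions at roots of unity, once one checks, as with $L,R$ in the proof of Proposition~\ref{pro-12}, that the auxiliary several-variable identity remains a well-defined formal power series after the substitution. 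A concrete attempt: feed the five parameters $\alpha,\beta,\gamma,t$ and the bookkeeping variable $z$ of~\eqref{eq-grf} with values chosen so that, after setting $z=0$, one side collapses to $\Fiii$ while the other collapses to $\Fii$; the extra parameter $\gamma$, absent from the plain Rogers--Fine identity and from the substitutions used for Proposition~\ref{pro-12} and Lemma~\ref{lem-g13}, is the most likely source of the freedom one needs.

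Some care is also needed about \emph{where} the two sides even converge as formal power series. For~\eqref{eq-conj2} this is unconditional: if $q_0^k=1$, then $\poch{q}{q}{n}$ and $\poch{1/q}{1/q}{n}$ each vanish to order $\lfloor n/k\rfloor$ at $q=q_0$, so both sides are well-defined power series in $q-q_0$ with coefficients in $\mathbb{C}[p]$. For~\eqref{eq-conj1} the left-hand side $\Fi=\sum_n\poch{1-y}{1-x}{n}$ converges at $(x_0,y_0)$ only if some factor $1-(1-y_0)(1-x_0)^j$ vanishes for infinitely many $j$, i.e.\ only if $1-y_0$ is a power of $1-x_0$, equivalently $p_0\in\langle q_0\rangle$; when $q_0$ is a \emph{primitive} $k$-th root this holds for every $k$-th root $p_0$, matching the conjecture, but in general the hypothesis of~\eqref{eq-conj1} should be read in this refined form. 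Inside this convergent regime, once $\Fii=\Fiii$ is available base-independently, \eqref{eq-conj1} follows from the paper's extension of $\Fi=\Fii$ (which holds in exactly the same regime, $p_0q_0^{k}=1$ for infinitely many $k$), and \eqref{eq-conj2} is immediate.

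The main obstacle --- and the reason this is still only a conjecture --- is the second step: producing an algebraic, base-independent proof of $\Fi=\Fiii$. The combinatorial proof essentially uses nonnegativity and the counting interpretation of the $x=y=0$ coefficients, which has no analogue at a root of unity; and an analytic rescue is barred because $\Fii$ and $\Fiii$ have disjoint domains of convergence ($|q|<1$ against $|q|>1$), meeting only on the unit circle, so one cannot continue one into the other across $|q|=1$ --- this is precisely the ``quantum modular'' phenomenon that Bringmann, Li and Rhoades~\cite{bring} exploit. I expect that either a well-chosen specialization of Theorem~\ref{thm-grf} (or of Watson's identity with $d\ne q$, or a two-step Bailey chain) produces the identity uniformly in the base, or else one must argue more delicately: track the remainder $R_N(x,y)$ in the truncated identity $\sum_{n\le N}\bigl[\text{$n$-th term of }\Fii\bigr]-\sum_{n\le N}\bigl[\text{$n$-th term of }\Fiii\bigr]=R_N$ and show that $R_N$ has a zero of order tending to $\infty$ at $q=q_0$ --- not merely at $q=1$, which is all the combinatorial proof yields.
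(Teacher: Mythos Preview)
The statement you are asked to prove is labeled a \emph{Conjecture} in the paper; the paper offers no proof of it, and explicitly says that the only available proof of $\Fi=\Fiii$ is combinatorial and therefore confined to the expansion at $p=q=1$. So there is no ``paper's own proof'' to compare against.

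Your submission is not a proof either, and you say so: it is a plan. The reduction you describe is sound and matches the paper's own discussion --- the paper already asserts that $\Fi=\Fii$ extends to the root-of-unity expansions via the Rogers--Fine argument, so everything indeed hinges on a base-independent proof of $\Fi=\Fiii$ (equivalently $\Fii=\Fiii$). But your proposal to obtain this from a specialization of the Generalized Rogers--Fine Identity (Theorem~\ref{thm-grf}) remains a hope, not an argument: you do not exhibit values of $\alpha,\beta,\gamma,t,z$ that collapse the two sides to $\Fii$ and $\Fiii$, and there is no a priori reason such a specialization exists --- the paper's authors, who derived Theorem~\ref{thm-grf}, evidently did not find one. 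The alternative you sketch (tracking the vanishing order of a truncated remainder $R_N$ at $q=q_0$) is likewise not carried out. So the genuine gap is exactly the one you name: the missing algebraic identity $\Fi=\Fiii$, and nothing in the proposal closes it.

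One side remark in your write-up is worth keeping independently of the conjecture: your observation that the left-hand side of~\eqref{eq-conj1} only converges as a formal power series when $p_0\in\langle q_0\rangle$ (equivalently $p_0q_0^k=1$ for some, hence infinitely many,~$k$) is correct, and shows that the hypothesis ``$p_0,q_0$ are $k$-th roots of unity for a common~$k$'' in the conjecture should be read in this slightly sharper form for the first assertion to even make sense.
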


Finally, we note that although the power series from Theorem~\ref{thm-main} have
a natural combinatorial interpretation as generating functions of combinatorial
objects, our proof of Theorem~\ref{thm-main} does not use this interpretation
at all. One might ask whether there is a way to interpret the identities
of Theorem~\ref{thm-main} combinatorially and thus provide an alternative proof.

\begin{problem} 
Apart from the (previously known) identity $\Fi(x,y)=\Fiii(x,y)$, 
is there a combinatorial proof of the identities in Theorem~\ref{thm-main}?
\end{problem}


\end{document}